\newtheorem{thm}{Theorem}
\newtheorem{lm}{Lemma}
\newtheorem{dfn}{Definition}
\newtheorem{rk}{Remark}
\def\R{{\mathbb R}}
\def\Z{{\mathbb Z}}
\def\0{{\mathbf 0}}
\def\1{{\mathbf 1}}
\def\R{{\mathbb R}}
\title{The groups $G_{k+1}^{k}$ and fundamental groups of configuration spaces}
\author{V.O.Manturov \footnote{Research is carried out with the support of Russian Science Foundation (project no. 16-11-10291).}}
\date{}
 \def\R{{\mathbb R}}
 \def\Z{{\mathbb Z}}
\begin{document}

\maketitle

\begin{flushright}
To Leonid Arkadievich Bokut' on the occasion of his 80th birthday
\end{flushright}

AMS MSC 57M25,57M27

Keywords: group, configuration space, braid, word problem, conjugacy problem

\begin{abstract}
In  \cite{HigherGnk}, the author has constructed natural maps from fundamental groups of  topological spaces (restricted configuration spaces) to the groups $G_{n}^{k}$. In the present paper, we show that in the case of $n=k+1$,
the group  $G_{k+1}^{k}$ is isomorphic to the fundamental group of some (quotient space of) some configuration space.
In particular, this leads to the solution of word and conjugacy problems in $G_{4}^{3}$ and sheds light on
$G_{k+1}^{k}$ for higher $k$.
\end{abstract}

\section{Introduction}

In \cite{Great}, the author defined a family of groups $G_{n}^{k}$
depending on two natural numbers $n>k$,
and formulated the following principle:
{\em if a dynamical system describing a motion of $n$ particles, admits some ``good'' codimension one property governed by exactly $k$ particles, then this dynamical system has a topological invariant valued in $G_{n}^{k}$.}
These groups are related to many problems in topology and combinatorial group theory, see, e.g., \cite{Coxeter},\cite{MN},
\cite{Imaginaire}.

For $n\in N$, let $[n]=\{1,\cdots, n\}$.

The groups  $G_{n}^{k}$ are defined as follows.
$$G_{n}^{k}=\langle a_{m}|(1),(2),(3)\rangle,$$ where
the generators $a_{m}$ are indexed by all $k$-element subsets of $[n]$,
the relation (1) means $$(a_{m})^{2}=1\;\; \mbox {for any unordered sets } m\subset \{1,\cdots,n\}, Card(m)=k;\;\;\;\eqno{(1)}$$ (2) means $$a_{m}a_{m'}=a_{m'}a_{m},\mbox{ if }Card(m\cap m')<k-1; \;\;\; \eqno{(2)}$$
and, finally, the relations (3) look as follows. For every set $U\subset [n]$ of cardinality $(k+1)$, let us order all its
 $k$-element subsets arbitrarily and denote them by $m^{1},\cdots, m^{k+1}$.
 Then (3) looks as
$$(a_{m^1}\cdots a_{m^{k+1}})^{2}=1.\;\;\; \eqno{(3)}$$
In view of (1), we can rewrite (3) in the form $$a_{m^{1}}\cdots a_{m^{k+1}}=a_{m^{k+1}}\cdots a_{m^{1}}. \eqno{(3')}$$

Note that in the case $n=k+1$, the relations (2) are void, since any two distinct subsets  $m,m'$ of $[n]$ of cardinality $k$ have intersection of cardinality  $k-1$.

In  \cite{HigherGnk}, groups  $G_{n}^{k}$ are related to fundamental groups of the following (reduced) configuration spaces. We take $n$-point sets in the $(k-1)$-dimensional real projective plane $\R{}P^{k-1}$, such that no $(k-1)$ of these points belong to the same (projective) $(k-3)$-plane.
In particular, for $k=3$ we deal  with  $n$-strand $\R{}P^{2}$-braids. There is a map from the fundamental group of the (ordered) configuration space to $G_{n}^{k}$.

These maps correspond to the good property ``some $k$ points are not in general position'' (i.e. belong the same $(k-2)$-projective plane). Note that the we have imposed the restriction that any $(k-1)$ points should be in general position.

The maps constructed in \cite{HigherGnk} have some obvious kernel corresponding to rotations of $\R{}P^{k-1}$.
It turns out that in the lowest level (for $n=k+1$), after factorisation by this kernel, we get an isomorphism of groups.

The kernel looks as follows: identifying $\R{}P^{k-1}$ with the quotient space of $S^{k-1}$ by the involution, we can just rotate $S^{k-1}$ about some axis
by $\pi$;
the corresponding motion of points is homotopically non-trivial; on the other hand, if points are in general position
from the very beginning, they remain in the general position during the rotation. Hence, no singular moment occurs and
the corresponding word is empty.

To remedy this,
we define the configuration space ${\tilde C}'_{n}(\R{}P^{k-1})$ of all unordered $n$-tuples of
points where the  first $(k-1)$ points are fixed and any $k-1$ points are in general position.
For example, for $\R{}P^{2}$, we can consider the path when one point $x_{1}$ is fixed an three other points
lie in the small neighbourhood of $x_{1}$; when rotating them around $x_{1}$, we get a non-trivial element
of the fundamental group of the corresponding configuration space, and during the whole path, there are no
moments when any three points are collinear.
Then every closed path in the configuration space giving rise to the empty word is homotopic to the trivial
path. See Lemma
\ref{lmA}.


More precisely, we shall prove the following
\begin{thm}
There is a subgroup ${\tilde G}_{k+1}^{k}$
of the group $G_{k+1}^{k}$ of index $2^{k-1}$
is isomorphic to $\pi_{1}({\tilde C}'_{k+1}(\R{}P^{k-1}))$.\label{th0}
\end{thm}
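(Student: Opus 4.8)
The plan is to study the wall-crossing homomorphism $\phi\colon \pi_{1}(\tilde{C}'_{k+1}(\R P^{k-1}))\to G_{k+1}^{k}$ already available from \cite{HigherGnk} and to pin down its image and kernel. The $k+1$ generators $a_{m}$ correspond to the $k+1$ ways in which $k$ of the $k+1$ points can become coplanar (lie in a common projective hyperplane, i.e. a $(k-2)$-plane); a generic loop meets the associated codimension-one walls $W_{1},\dots,W_{k+1}$ transversally, and $\phi$ reads off the ordered product of the $a_{m}$ as the walls are crossed. First I would recall from \cite{HigherGnk} that $\phi$ is well defined, the backtrack across one wall producing relation (1) and a small loop around the deepest stratum producing relation (3). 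I then pass to the abelianisation $G_{k+1}^{k}\to(\Z/2)^{k+1}$ and compute the images of the two distinguished projective-line loops, obtained by carrying one of the two non-fixed points once around a generic projective line: such a loop crosses exactly the $k$ walls that involve the moving point, each once, so its image is the product of those $k$ generators, namely all generators but one. The two resulting vectors span a rank-two subgroup $V\le(\Z/2)^{k+1}$, and the image of $\phi$ lies in the preimage $\tilde{G}_{k+1}^{k}$ of $V$, a subgroup of index $2^{(k+1)-2}=2^{k-1}$.

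The structural heart is the local description of the wall arrangement inside $\tilde{C}'$. The genericity hypothesis forces any $k-1$ of the points to span a hyperplane, so the strata where some $(k-1)$-subset collapses are excised; this is precisely why relations (2) are void when $n=k+1$. By elementary projective linear algebra I would then show that whenever two walls $W_{i}$ and $W_{j}$ meet, their $k-1$ common points already span a unique hyperplane, which forces all $k+1$ points into that hyperplane. Hence all walls $W_{1},\dots,W_{k+1}$ meet precisely along the single codimension-two stratum $\Sigma$ of totally coplanar configurations, and nowhere else. Locally $\tilde{C}'$ looks like $\Sigma\times D^{2}$, the walls appearing as $k+1$ lines through the centre of $D^{2}$; a small loop around $\Sigma$ crosses all walls in cyclic order twice, bounds the disk, and is therefore contractible, yielding exactly relation (3), $(a_{m^{1}}\cdots a_{m^{k+1}})^{2}=1$.

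With this local model in hand, injectivity of $\phi$ follows. Suppose $\phi(\gamma)=1$ in $G_{k+1}^{k}$. Since (1) and (3) are the only defining relations, the wall-crossing word of $\gamma$ reduces to the empty word by repeated application of these two relations, and each application is realised by an honest homotopy of $\gamma$ in $\tilde{C}'$: relation (1) by cancelling a backtrack across a wall, relation (3) by pushing an arc of $\gamma$ across the normal disk through $\Sigma$ (using the form (3')). After these homotopies $\gamma$ carries the empty word, whereupon Lemma \ref{lmA} gives that $\gamma$ is null-homotopic. Thus $\ker\phi$ is trivial.

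For surjectivity onto $\tilde{G}_{k+1}^{k}$ I would realise a generating set by explicit loops: the two projective-line loops realise elements whose images span $V$ in the abelianisation, while the loops braiding the two mobile points around one another and around the fixed points, together with the contractible loop around $\Sigma$, account for the remaining part of $\tilde{G}$, as one checks against the Reidemeister--Schreier presentation of the index-$2^{k-1}$ subgroup; the index computation of the first paragraph then confirms that these images fill out exactly $\tilde{G}_{k+1}^{k}$. Together with injectivity this gives the asserted isomorphism. The \emph{main obstacle} is the structural claim of the middle paragraphs: verifying that the genericity condition leaves no codimension-two stratum other than $\Sigma$, so that (1) and (3) really are the only local relations and every reduction in $G_{k+1}^{k}$ lifts to a homotopy. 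This is what makes the clean reduction to Lemma \ref{lmA} possible, and it is exactly here that the hypothesis $n=k+1$ is essential.
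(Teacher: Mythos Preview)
Your injectivity argument is essentially the paper's: realise each application of relation (1) or (3) by an explicit homotopy (Lemma \ref{lm2} and Lemma \ref{quadrisec} in the paper), reduce to a void braid, and invoke Lemma \ref{lmA}. The local ``all walls meet only along $\Sigma$'' picture you sketch is a reasonable heuristic for why (3) is realisable, though the paper simply cites the explicit homotopy from \cite{HigherGnk} rather than analysing the stratification.

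Where you diverge from the paper, and where the real gap lies, is in the definition of ${\tilde G}_{k+1}^{k}$ and the surjectivity argument. The paper does \emph{not} define ${\tilde G}$ via the abelianisation. Instead it builds an explicit section $g$ from \emph{words} in the generators to \emph{paths} in ${\tilde C}'_{k+1}(\R P^{k-1})$ between a fixed finite set of $2^{k-1}$ basepoints (Lemmas \ref{lemBB}, \ref{lm2}, \ref{governed}): each letter moves the configuration from one basepoint to another, and the endpoint is governed by a sign-flip map to $(\Z/2)^{k-1}$. The subgroup ${\tilde G}_{k+1}^{k}$ is then \emph{defined} as the set of words for which $g$ returns to the initial basepoint, i.e.\ the kernel of this endpoint map. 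With this definition, surjectivity of $f$ onto ${\tilde G}$ is immediate from $f(g(w))=w$, and the index is read off directly from the number of basepoints.

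Your route requires two things you do not establish. First, to show $\mathrm{im}(\phi)$ lies in the preimage of $V$, it is not enough to compute the images of two particular loops; you must show those two projective-line classes generate $H_{1}({\tilde C}'_{k+1};\Z/2)$, or otherwise bound the image from above. Second, your surjectivity onto ${\tilde G}$ is deferred to an unperformed Reidemeister--Schreier computation. Both could in principle be completed, but the paper's construction of the section $g$ bypasses them entirely: once you have $g$ with $f\circ g=\mathrm{id}$, both the containment $\mathrm{im}(f)\subseteq{\tilde G}$ (a closed loop induces trivial sign change) and surjectivity come for free. That explicit section is the organising idea you are missing.
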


The space ${\tilde C}'_{n}$ and the group ${\tilde G}_{k+1}^{k}$ will be defined later.

The simplest case of the above Theorem is
\begin{thm}
The group ${\tilde G}_{4}^{3}$
(subgroup of index four of $G_{4}^{3}$)  is isomorphic to $\pi_{1}(FBr_{4}(\R P^{2}))$,
the $4$-strand pure braid group on $\R{}P^{2}$ with two point fixed.\label{th1}
\end{thm}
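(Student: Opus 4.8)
The plan is to prove the statement by identifying $\tilde G_4^3$ as the image of $\pi_1(FBr_4(\R P^2))$ under the map constructed in \cite{HigherGnk} and then showing this map, after restricting to the right index-four subgroup and factoring out the rotation kernel, is an isomorphism. Since Theorem \ref{th1} is the special case $k=3$ of Theorem \ref{th0}, I would view it as a concrete model where all the abstract ingredients can be made explicit. First I would recall the map $\phi\colon\pi_1(\tilde C'_4(\R P^2))\to G_4^3$ of \cite{HigherGnk}: a loop of four points in $\R P^2$ (with two fixed and any two remaining in general position, i.e. never three collinear) is generic with finitely many moments at which exactly three of the four points become collinear; at each such moment one records the generator $a_m$ indexed by the triple $m\subset[4]$ that has gone collinear, and $\phi$ sends the loop to the ordered product of these generators. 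The point is that relations (1) and (3) of $G_4^3$ are precisely the local moves (a collinearity appearing and immediately disappearing; and the codimension-two degeneration where four points become collinear, yielding the hexagon relation $(a_1a_2a_3a_4)^2=1$) that witness homotopies of such loops, so $\phi$ is well defined.

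Next I would pin down the index-four subgroup. The full configuration space $C_4(\R P^2)$ of four points carries the kernel described in the introduction, generated by the $\pi$-rotations of $S^2$, which produce homotopically nontrivial loops that cross no collinearity and hence map to the empty word; this is exactly the obstruction that prevents $\phi$ from being injective on the unconstrained space. Fixing the first two points (the $FBr$ condition, with $k-1=2$ points fixed) and passing to $\tilde C'_4$ kills these rotational loops, which accounts for the factor $2^{k-1}=2^2=4$ in the index: I would compute that the covering/fixing data cuts out a subgroup of index four, and I would \emph{define} $\tilde G_4^3\subset G_4^3$ to be the corresponding image. Then I would establish surjectivity of $\phi$ onto $\tilde G_4^3$ by realizing each generator $a_m$ by an explicit loop in which the three points of $m$ sweep through a single collinear configuration and return, using the two fixed points to anchor the motion (as in the introduction's $\R P^2$ example of three points rotating about a fixed $x_1$).

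The heart of the argument, and the step I expect to be the main obstacle, is \textbf{injectivity}: showing that every loop in $\tilde C'_4(\R P^2)$ whose word in $G_4^3$ is trivial (reducible to the empty word using only relations (1) and (3)) is already null-homotopic in the configuration space. This is precisely the content promised by Lemma \ref{lmA}, so I would reduce to it: I must verify that each defining relation of $G_4^3$ lifts to an honest homotopy of loops of four points, and conversely that the generic-position stratification of $\tilde C'_4(\R P^2)$ has its codimension-one walls labelled by the $a_m$ and its codimension-two strata labelled exactly by relations of types (1) and (3). The delicate part is the completeness of this dictionary — one must rule out any codimension-two degeneration that would impose a relation in $\pi_1$ not already present in $G_4^3$, and here the hypothesis that any two points (more generally any $k-1=2$ points) stay in general position is what forbids the extra collisions that would spoil the correspondence. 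Once the stratification is shown to have exactly the strata matching relations (1) and (3), a van Kampen / stratified-transversality argument gives a presentation of $\pi_1(\tilde C'_4(\R P^2))$ whose generators and relations coincide with those of $\tilde G_4^3$, and the isomorphism follows; I would expect to invoke Lemma \ref{lmA} to supply exactly the null-homotopy statement that closes this loop.
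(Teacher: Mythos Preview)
Your broad plan---take the map $\phi=f$ of \cite{HigherGnk}, show it surjects onto the right subgroup, then prove injectivity by relating relations in $G_{4}^{3}$ to homotopies and invoking Lemma~\ref{lmA}---matches the paper's architecture. Two points of divergence are worth noting.

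First, your definition of ${\tilde G}_{4}^{3}$ as ``the image of $\phi$'' is not how the paper proceeds, and your explanation of the index $4$ via the rotation kernel is not quite the mechanism used. The paper builds an explicit inverse $g$ sending each generator $a_{m}$ to a path that moves between $2^{k-1}$ fixed basepoints $(y_{1},\dots,y_{k},y_{s})$, where $s\in\{\pm1\}^{k-1}$; Lemma~\ref{governed} records exactly which sign flips each $a_{m}$ effects on $s$. Then ${\tilde G}_{k+1}^{k}$ is \emph{defined} intrinsically as the subgroup of words whose net sign change is trivial, so that $g$ lands in genuine loops; the index $2^{k-1}$ is read off from this sign homomorphism, not from counting rotations. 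Your image definition presupposes what is to be proved, and note that a single $a_{m}$ is \emph{not} realized by a closed loop, so surjectivity must be argued for generators of ${\tilde G}_{4}^{3}$, not of $G_{4}^{3}$.

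Second, and more substantively, the paper's injectivity argument is simpler than the van Kampen/stratification route you sketch. You propose both (a) lifting each relation of $G_{4}^{3}$ to a homotopy and (b) ``conversely'' classifying all codimension-two strata to show no extra relations appear in $\pi_{1}$. The paper uses only (a) together with Lemma~\ref{lmA}: given $\gamma$ with $f(\gamma)=1$, one fixes a reduction of the word $f(\gamma)$ to the empty word inside $G_{k+1}^{k}$ and lifts each elementary step to a homotopy of $\gamma$ itself---the cancellation $a_{m}a_{m}\to 1$ via the second statement of Lemma~\ref{lm2}, and the length-$(2k+2)$ relation via Lemma~\ref{quadrisec}. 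At the end $\gamma$ has been homotoped to a void braid, and Lemma~\ref{lmA} finishes. Your step (b) is therefore unnecessary: you are not computing a presentation of $\pi_{1}$ and comparing, you are directly exhibiting the null-homotopy dictated by the group-theoretic reduction. Dropping (b) removes precisely the ``completeness of the dictionary'' obstacle you flagged as the hardest part.
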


The paper is organized as follows. In the next section, we give all necessary definitions and
construct maps from configuration spaces of points in $\R{}P^{k-1}$ to $G_{k+1}^{k}$. In Section 3, we prove Theorem \ref{th0}.
In Section 4, we shall discuss an algebraic lemma about reduction of the word problem in $G_{k+1}^{k}$
to the word problem in some subgroup of it.


We conclude the paper by Section 4 by discussing some open problems for further research.

\section{Basic definitions}

Let us now pass to the definition of spaces $C'_{n}(\R{}P^{k-1})$
and maps from the corresponding fundamental groups to
the groups $G_{n}^{k}$.

Let us fix a pair of natural number $n>k$. A point in $C'_{n}(\R{}P^{k-1})$
is an ordered set of $n$ pairwise distinct points in  $\R{}P^{k-1}$, such that any $(k-1)$
of them are in general position. Thus, for instance, if $k=3$,
then the only condition is that these points are pairwise distinct. For $k=4$
 for points $x_{1},\dots, x_{n}$ в $\R{}P^{3}$
 we impose the condition that no three of them belong
 to the same line (though some four are allowed to belong to the same plane),
 and for  $k=5$ a point in $C'_{n}(\R{}P^{4})$ is a set of ordered $n$ points in  $\R{}P^{4}$,
 with no four of them belonging to the same $2$-plane.

Let us use the projective coordinates $(a_{1}:a_{2}:\cdots: a_{k})$ in $\R{}P^{k-1}$
and let us fix the following  $k-1$ points in general position, $y_{1},y_{2}, \cdots, y_{k-1}\in \R{}P^{k-1}$,
where $a_{i}(y_{j})=\delta_{i}^{j}$.
Let us  define
the subspace ${\tilde C}'_{n}(\R{}P^{k-1})$ taking those $n$-tuples of points
$x_{1},\cdots, x_{n}\in \R{}P^{k-1}$ for which $x_{i}=y_{i}$ for $i=1,\cdots, k-1$.

 We say that a point $x\in C'_{n}(\R^{k-1})$  is {\em singular}, if the set of points $x=(x_{1},\dots, x_{n})$, corresponding $x$, contains some subset of $k$ points lying on the same $(k-2)$-plane. Let us fix two  non-singular points $x,x'\in C'_{n}(\R{}P^{k-1}).$

We shall consider smooth paths $\gamma_{x,x'}: [0,1]\to C'_{n}(\R^{k-1})$.
For each such path there are values $t$ for which $\gamma_{x,x'}(t)$ is not in the general position (some $k$ of them belong to the same $(k-2)$-plane). We call these values $t\in [0,1]$ {\em singular}.

On the path $\gamma$, we say that the moment $t$ of passing through the singular point $x$, corresponding to the set  $x_{i_1},\cdots, x_{i_k}$, is {\em transverse} (or stable) if for any sufficiently small perturbation ${\tilde \gamma}$ of the path  $\gamma$, in the neighbourhood of the moment $t$ there exists exactly one time moment $t'$ corresponding to some set of points $x_{i_1},\cdots, x_{i_k}$ not in general position.

\begin{dfn}
We say that a path is {\em good and stable} if the following holds:

\begin{enumerate}

\item
The set of singular values $t$ is finite;

\item For every singular value  $t=t_{l}$ corresponding to $n$ points representing $\gamma_{x,x'}(t_{l})$, there exists only one subset of $k$ points belonging to a $(k-2)$-plane;

\item Each singular value is  {\em stable}.

\end{enumerate}
\end{dfn}

\begin{dfn}
We say that the path without singular values is {\em void}.
\end{dfn}

\begin{dfn}
By a {\em braid} we mean a smooth path in the above configuration space whose
initial and terminal configurations of points coincide as sets.
We say that a braid whose ends $x,x'$ coincide with respect to the order, is {\em pure}. We say that two  braids $\gamma,\gamma'$ with endpoints $x,x'$ are {\em isotopic}
 if there exists a continuous family  $\gamma^{s}_{x,x'},s \in [0,1]$
 of smooth paths with fixed ends such that  $\gamma^{0}_{x,x'}=\gamma,\gamma^{1}_{x,x'}=\gamma'$.
\end{dfn}

 \begin{rk}
 By a small perturbation, any path can be made good and stable (if endpoints are generic,
 we may also require that the endpoints remain fixed).
 \end{rk}

\begin{dfn}
A path from $x$ to $x'$ is called {\em a braid} if the points representing $x$ are
the same as those representing $x'$ (possibly, in different orders); if $x$ coincides
with $x'$ with respect to order, then such a braid is called {\em pure}.
\end{dfn}

 There is an obvious concatenation structure on the set of braids: for paths $\gamma_{x,x'}$ and $\gamma'_{x',x''}$,
  the concatenation is defined as a path $\gamma''_{x,x''}$ such that $\gamma''(t)=\gamma(2t)$ for $t\in [0,\frac{1}{2}]$ and $\gamma''(t)=\gamma'(2t-1)$ for $t\in [\frac{1}{2},1]$; this path can be smoothed in the neighbourhood of  $t=\frac{1}{2}$; the result of such smoothing is well defined up to isotopy.

  Thus, the sets of braids and pure braids (for fixed $x$) admit
  a group structure.  This latter
  group is certainly isomorphic to the fundamental group  $\pi_{1}(C'_{n}(\R^{k-1}))$.
  The former group is isomorphic to the fundamental group of the quotient space by the action of the permutation group.

\section{The realisability of  $G_{k+1}^{k}$}

The main idea of the proof of Theorem \ref{th0} is to associate with every word in $G_{k+1}^{k}$
a braid in ${\tilde C}'_{k+1}(\R{}P^{k-1})$.

Let us start with the main construction from \cite{HigherGnk}.

With each good and stable path from $PB_{n}(\R{}P^{2})$
we associate an element of the group  $G_{n}^{k}$ as follows. We enumerate all singular values of
 our path $0<t_{1}<\dots <t_{l}<1$ (we assume than  $0$ and $1$
 are not singular). For each singular value $t_{p}$ we have a set $m_{p}$ of $k$ indices corresponding
 to the numbers of points which are not in general position. With this value we associate the letter  $a_{m_{p}}$. With the whole
 path  $\gamma$ (braid) we associate the product $f(\gamma)=a_{m_{1}}\cdots a_{m_{l}}$.

\begin{thm}\cite{HigherGnk}
The map   $f$ takes isotopic braids to equal elements
of the group  $G_{n}^{k}$.
For pure braids, the map   $f$ is a homomorphism  $f:\pi_{1}(C'_{n}(\R{}P^{2}))\to G_{n}^{k}$.
\label{thgn3}
\end{thm}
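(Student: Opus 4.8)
The plan is to verify two things: that the value $f(\gamma)\in G_{n}^{k}$ depends only on the isotopy class of $\gamma$, and that concatenation of paths is carried to multiplication of words. The first point is the substantive one; the homomorphism property will then follow formally. I would begin by recording that $f$ is defined on good and stable paths by reading off the crossings with the discriminant, and that by the Remark every path is isotopic to such a path. Hence it suffices to understand how the word $a_{m_{1}}\cdots a_{m_{l}}$ changes under a generic isotopy $\gamma^{s}$, $s\in[0,1]$, joining two good and stable representatives, and to check that every change is an application of one of the defining relations $(1)$, $(2)$, $(3)$.

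The geometric backbone is a stratification of $C'_{n}(\R{}P^{k-1})$ by the discriminant $\Sigma$ of singular configurations. A configuration is singular when some $k$ of its points lie on a common $(k-2)$-plane; since any $k-1$ of the points are in general position and so determine a unique such plane, this is a single equation, and each sheet $\Sigma_{m}$ (indexed by a $k$-subset $m$) is a smooth hypersurface, with $\Sigma=\bigcup_{m}\Sigma_{m}$ of codimension one. A good and stable path is transverse to $\Sigma$, meeting it in finitely many points each on a single sheet, and $a_{m_{p}}$ records the sheet crossed. I would next argue that a generic isotopy sweeps out a $2$-disk that meets $\Sigma$ in a $1$-manifold, is transverse to the codimension-two strata, and misses all strata of codimension $\ge 3$. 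The word $f(\gamma^{s})$ is then locally constant in $s$ away from finitely many critical values, at each of which exactly one elementary event occurs.

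I would then match each event to a relation. First, a Morse tangency of the disk with a single sheet $\Sigma_{m}$ is a birth or death of a cancelling pair of intersection points, inserting or deleting $a_{m}a_{m}$, which is trivial by relation~$(1)$. Next come the codimension-two strata, of which there are two kinds. If two sheets $\Sigma_{m},\Sigma_{m'}$ with $\mathrm{Card}(m\cap m')<k-1$ meet transversally, the two singular conditions are geometrically independent, only these two letters are involved, and crossing the stratum interchanges two adjacent letters, which is permitted by relation~$(2)$. The remaining strata arise when all $k+1$ points of some $(k+1)$-element set $U$ lie on a common $(k-2)$-plane: indeed, if two $k$-subsets sharing $k-1$ (necessarily general-position) points were simultaneously singular, the spanning hyperplanes they determine would coincide and carry all $k+1$ points, so this case cannot occur in isolation and is absorbed into this stratum. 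There all $k+1$ sheets $\Sigma_{m^{i}}$ with $m^{i}\subset U$ meet, a small loop encircling the stratum crosses each exactly once, and reading the crossings along the two arcs of the loop produces $a_{m^{1}}\cdots a_{m^{k+1}}$ and its reverse, so the word changes by relation~$(3')$. Since every event alters the word only within the relations of $G_{n}^{k}$, the class $f(\gamma^{s})$ is constant, which is the isotopy invariance.

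The hard part will be the local model of this last stratum: one must verify that near a configuration with $k+1$ points of $U$ on one $(k-2)$-plane the $k+1$ sheets really do meet so that the encircling loop crosses each of them exactly once and in exactly the reversed cyclic order on its two arcs, so that the resulting change is genuinely relation~$(3')$ and not some other word; establishing the right normal form of $\Sigma$ near this codimension-two locus, and the transversality claims that push all degenerations into codimensions one and two, is where the work lies. Granting isotopy invariance, the homomorphism property is immediate: for concatenation the singular values of $\gamma\cdot\gamma'$ are those of $\gamma$ followed by those of $\gamma'$, and by the Remark the smoothing near $t=\tfrac12$ creates no new crossings, so $f(\gamma\cdot\gamma')=f(\gamma)f(\gamma')$. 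Restricting to pure braids, whose isotopy classes form $\pi_{1}(C'_{n}(\R{}P^{2}))$, then yields the asserted homomorphism $f:\pi_{1}(C'_{n}(\R{}P^{2}))\to G_{n}^{k}$.
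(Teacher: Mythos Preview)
The paper does not prove this theorem; it is quoted from \cite{HigherGnk} and used as input. So there is no ``paper's own proof'' to compare against here. That said, your sketch is the standard argument and is, in outline, what \cite{HigherGnk} does: stratify the discriminant $\Sigma=\bigcup_{m}\Sigma_{m}$, put a generic one-parameter isotopy in general position with respect to the codimension-two strata, and identify the three local events with the three relation types. Your matching is correct, including the key observation that when $\mathrm{Card}(m\cap m')=k-1$ the intersection $\Sigma_{m}\cap\Sigma_{m'}$ is forced to lie in the locus where all $k+1$ points of $U=m\cup m'$ are on one $(k-2)$-plane (because the shared $k-1$ points, being in general position by the definition of $C'_{n}$, pin down that plane), so that all $k+1$ sheets $\Sigma_{m^{i}}$, $m^{i}\subset U$, meet along the same codimension-two stratum.

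The part you flag as ``hard'' is exactly the substantive step, and your description of it is accurate but not yet a proof: one must produce a local model near that stratum showing that the $k+1$ sheets are arranged so that a small transverse disk meets them in $k+1$ arcs through a common point, ordered so that the two halves of the boundary circle read off $a_{m^{1}}\cdots a_{m^{k+1}}$ and its reverse. In \cite{HigherGnk} this is done by an explicit geometric move (moving one point across the hyperplane spanned by the other $k$), and the present paper alludes to this in the proof of Lemma~\ref{quadrisec}. Your transversality claims (genericity pushes everything to codimension $\le 2$) also need the observation that all deeper strata of $\Sigma$ already have codimension $\ge 3$; this is straightforward but should be said. The homomorphism statement follows exactly as you wrote.
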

We are interested in the case $k=3$.

Now we claim that

{\em Every word from  $G_{k+1}^{k}$ can be realised by a path of the above form.}

Note that if we replace $\R{}P^{k-1}$ with $\R^{k-1}$, the corresponding statement will fail.
Namely, starting with the configuration of four points, $x_{i},i=1,\cdots, 4$, where
$x_{1},x_{2},x_{3}$ form a triangle and $x_{4}$ lies inside this triangle, we see that
any path starting from this configuration will lead to a word starting from $a_{124},a_{134},
a_{234}$ but not from $a_{123}$. In some sense the point $4$ is ``locked'' and the points
are not in the same position.

\begin{figure}
\centering\includegraphics[width=100pt]{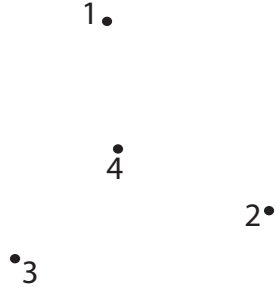}
\caption{The ``locked'' position for the move $a_{123}$}
\label{locked}
\end{figure}

The following well known theorem (see, e.g., \cite{Wu}) plays a crucial role in the construction
\begin{thm}
For any two sets of $k+1$ points in general position in $\R{}P^{k-1}$,
$(x_{1},\cdots, x_{k+1})$ and $(y_{1},\cdots, y_{k+1})$ there is an action of
$PGL(k, \R)$ taking all $x_{i}$ to $y_{i}$.\label{wulem}
\end{thm}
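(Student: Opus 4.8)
The plan is to recognize this as the classical statement that $PGL(k,\R)$ acts transitively on ordered projective frames of $\R P^{k-1}$, and to prove it by carrying both frames to one fixed standard frame. First I would fix the standard frame given by the classes $[e_{1}],\dots,[e_{k}],[e_{1}+\cdots+e_{k}]$, where $e_{1},\dots,e_{k}$ is the standard basis of $\R^{k}$ (recall that a point of $\R P^{k-1}$ is a line through the origin in $\R^{k}$, on which $PGL(k,\R)$ acts through $GL(k,\R)$). It then suffices to produce, for an arbitrary frame $(x_{1},\dots,x_{k+1})$ in general position, an element of $GL(k,\R)$ carrying the standard frame to it projectively: composing the map built for $(x_{i})$ with the inverse of the map built for $(y_{i})$ yields an element of $PGL(k,\R)$ taking $x_{i}$ to $y_{i}$ for every $i$.

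To build such a map I would work at the level of representing vectors. Choose representatives $v_{1},\dots,v_{k+1}\in\R^{k}$ of $x_{1},\dots,x_{k+1}$. Here general position means that every $k$ of the $k+1$ points are projectively independent, i.e. every $k$ of the vectors $v_{i}$ are linearly independent. In particular $v_{1},\dots,v_{k}$ form a basis, so I may write $v_{k+1}=\sum_{i=1}^{k}c_{i}v_{i}$. The crucial observation is that every coefficient $c_{i}$ is nonzero: if some $c_{j}=0$ with $1\le j\le k$, then $v_{k+1}=\sum_{i\le k,\,i\neq j}c_{i}v_{i}$ lies in the span of the $k-1$ vectors $\{v_{i}:i\le k,\ i\neq j\}$, so the $k$ vectors corresponding to the points $\{x_{i}:1\le i\le k+1,\ i\neq j\}$ are linearly dependent, contradicting the general position of that $k$-element subset.

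Once all $c_{i}\neq 0$, I would rescale the representatives, replacing $v_{i}$ by $c_{i}v_{i}$ for $i=1,\dots,k$, which changes nothing projectively. After this normalisation $v_{k+1}=v_{1}+\cdots+v_{k}$, and the linear map $A\in GL(k,\R)$ defined by $Ae_{i}=v_{i}$ is invertible (the $v_{i}$ still forming a basis) and satisfies $A(e_{1}+\cdots+e_{k})=v_{k+1}$. Hence $A$ sends the standard frame to $(x_{1},\dots,x_{k+1})$ projectively, as required, and the composition $B A^{-1}$ (with $B$ the analogous map for the $y_{i}$) is the desired projective transformation.

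I expect the only genuine subtlety to be the non-vanishing of the coefficients $c_{i}$, which is precisely where the full strength of the general position hypothesis is used: one needs independence of \emph{all} $k$-element subsets, not merely of the first $k$ points. Everything else is routine linear algebra together with the passage between $GL(k,\R)$ and its projectivisation $PGL(k,\R)$.
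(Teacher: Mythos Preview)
Your argument is correct and is exactly the standard proof that $PGL(k,\R)$ acts simply transitively on ordered projective frames: reduce to a fixed standard frame $[e_1],\dots,[e_k],[e_1+\cdots+e_k]$, and use the general-position hypothesis precisely to guarantee that all the coefficients $c_i$ in the expansion of the last representative are nonzero, so that the rescaling step is legitimate.

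As for comparison with the paper: the paper does not prove this theorem at all. It is stated as a well-known fact with a reference (to \cite{Wu}) and is used as a black box. So your write-up in fact supplies more than the paper does here. One minor remark: your argument actually gives the slightly stronger (and equally classical) conclusion that the element of $PGL(k,\R)$ carrying the $x_i$ to the $y_i$ is \emph{unique}, since the scalars $c_i$ are determined up to a common factor; the paper only needs existence, but uniqueness is what underlies its informal claim that ``there is no difference between all possible non-degenerate starting positions.''
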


For us, this  will mean that there is no difference between all possible ``non-degenerate
starting positions'' for $k+1$ points in $\R{}P^{k}$.

We shall deal with paths in ${\tilde C'}_{k+1}(\R{}P^{k-1})$ similar to braids. Namely,
we shall fix a set of $2^{k-1}$ points such that all paths will start and end at these points.

We shall denote homogeneous coordinates in $\R{}P^{k-1}$ by $(a_{1}:\cdots: a_{k})$ in
contrast to points (which we denote by $(x_{1},\cdots, x_{k+1})$).
\subsection{Constructing a braid from a word in $G_{k+1}^{k}$}

Our main goal is to construct a braid by a word. To this end, we need a base point for the braid.
For the sake of convenience, we shall use not one, but rather $2^{k-1}$ reference points.
For the first $k$ points $y_{1}=(1:0:\cdots:0),\cdots, y_{k}=(0:\cdots:0:1)$ fixed, we
will have $2^{k-1}$ possibilities for the choice of the last point. Namely, let us
consider all possible strings  of length $k$ of $\pm 1$ with the last coordinate $+1$:
$(1,1,\cdots, 1,1),(1,\cdots, 1,-1,1),\cdots, (-1,-1,\cdots, -1,1)$ with $a_{k}\sim 1$. We shall denote
these points by $y_{s}$ where $s$ records the first $(k-1)$ coordinates of
the point.

Now, for each string $s$ of length $k$ of $\pm 1$, we set $z_{s}=(y_{1},y_{2},\cdots, y_{k},y_{s})$.

The following lemma is evident.
\begin{lm}
For every point $z\in \R{}P^{k-1}$ with projective coordinates
$(a_{1}(z):\cdots: a_{k-1}(z):1)$, let ${\tilde z}=(sign(a_{1}(z)):sign(a_{2}(z)):
\cdots: sign(a_{k-1}(z)):1)$. Then there is a path
between $(y_{1},\cdots, y_{k},z)$ and $(y_{1},\cdots, y_{k}, {\tilde z})$
in ${\tilde C}'_{k+1}(\R{}P^{k})$ with the first points $y_{1},\cdots, y_{k}$
fixed, and the corresponding path in ${\tilde C}'_{k+1}$ is void.
\label{lemmaB}
\end{lm}

\begin{proof}
Indeed, it suffices just to connect $z$ to ${\tilde z}$ by a geodesic.
\end{proof}

From this we easily deduce the following
\begin{lm}
Every  point $y\in {\tilde C}'_{k+1}(\R{}P^{k-1})$ can be connected by a void
path to some
$(y_{1},\cdots, y_{k}, y_{s})$ for some $s$.\label{lemBB}
\end{lm}

\begin{proof}
Indeed, the void path can be constructed in two steps. At the first step, we
construct a path which moves both $y_{k}$ and $y_{k+1}$, so that $y_{k}$
becomes $(0:\cdots 0:1)$, and at the second step, we use Lemma \ref{lemmaB}.
To realise the first step, we just use linear maps which keep the hyperplane
$a_{k}=0$ fixed.
\end{proof}

The lemma below shows that the path mentioned in Lemma \ref{lemBB} is unique up to homotopy.

\begin{lm}
Let $\gamma$ be a closed path in ${\tilde C}'_{k+1}(\R{}P^{k-1})$ such that
the word $f(\gamma)$ is empty. Then $\gamma$ is homotopic to the trivial braid.
\label{lmA}
\end{lm}

\begin{proof}

In ${\tilde C}'_{k+1}$, we deal with the motion of points, where all but $x_{k},x_{k+1}$ are fixed.

Consider the projective hyperplane ${\cal P}_{1}$ passing through $x_{1},\cdots, x_{k-1}$ given
by the equation $a_{k}=0$.

We know that none of the points $x_{k}, x_{k+1}$ is allowed to belong to ${\cal P}_{1}$.
Hence, we may fix the last coordinate $a_{k}(x_{k})=a_{k}(x_{k+1})=1$.

Now, we may pass to the affine coordinates of these two points (still to be denoted
by $a_{1},\cdots, a_{k}$).

Now, the condition $\forall i=1,\cdots, k-1: a_{j}(x_{k})\neq a_{j}(x_{k+1})$ follows
from the fact that the points $x_{1},\cdots, {\hat{x_{j} }}, x_{k+1}$ are generic.

This means that $\forall i=1,\cdots, k-1$ the sign of  $a_{i}(x_{k})-a_{i}(x_{k+1})$ remains fixed.

Now, the motion of points $x_{k},x_{k+1}$ is determined by their coordinates
$a_{1},\cdots, a_{j}$, and since their signs are fixed, the configuration space
for this motions is simply connected.

This means that the configuration space for  $\gamma$ 
is the direct product of the two configuration spaces: the one for the point $x_{k}$ and
the one for the point $x_{k+1}$. Having $x_{1},\cdots, x_{k-1}$ fixed on $\R{}P{k-1}$,
the configuration space of admissible positions for $x_{k}$ is nothing but  
the hemisphere; for fixed $x_{k}$, the admissible positions for $x_{k+1}$ are regulated
by inequalities in coordinates; this space is also simply connected.

\end{proof}

\subsection{The surjectivity}

In order to prove that some map is a bijection, it is crucial to understand the surjectivity.
For the case of braid groups and words in $G_{n}^{k}$, this surjectivity is attained in the case of $n=k+1$.

Namely, our next strategy is as follows. Having a word in $G_{k+1}^{k}$, we shall associate
with this word a path in ${\tilde C}'_{k+1}(\R{}P^{k-1})$. After each letter,
we shall get to  $(y_{1},\cdots ,y_{k},y_{s})$ for some $s$.

Let us start from $(y_{1},\cdots, y_{k},y_{1,\cdots, 1})$.


After making the final step, one can calculate the coordinate of the $(k+1)$-th
points. They will be governed by Lemma
\ref{governed} (see ahead). As we shall see later, those words we have to check for
the solution of the word problem in $G_{k+1}^{k}$, will lead us to closed paths,
i.e., pure braids.

Let us be more detailed.


\begin{lm}
Given a non-singular set of points $y$ in $\R{}P^{k-1}$.
Then for every set of $k$ numbers $i_{1},i_{2},\cdots, i_{k}\in [n]$,
 there exists a path $y_{i_{1}\cdots i_{k}}(t)= y(t)$ in $C'_{n}(\R{}P^{k-1})$, having   $y(0)=y(1)=y$
 as the starting point and the final point and
 with only one singular moment corresponding to the numbers $i_{1},\cdots, i_{k}$ which
 encode the points not in general position; moreover, we may assume that at this moment
  all points except $i_{1}$,
  are fixed during the path.

 Moreover, the set of paths possessing this property is connected:
   any other path ${\tilde y}(t)$,
 possessing the above properties, is homotopic to $y(t)$ in this class.
 \label{lm2}
\end{lm}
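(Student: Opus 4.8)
The plan is to reduce the statement to a single transversal crossing of a fixed hyperplane, and then treat existence and uniqueness separately.

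First I would reformulate the singularity condition. Since $y$ is non-singular, the $k-1$ points $y_{i_2},\dots,y_{i_k}$ are in general position and hence span a unique $(k-2)$-plane $H$, which is a hyperplane in $\R P^{k-1}$. The $k$ points $x_{i_1},\dots,x_{i_k}$ lie on a common $(k-2)$-plane precisely when $x_{i_1}\in H$. Thus an admissible path with one singular moment of type $\{i_1,\dots,i_k\}$ is exactly a path of the configuration in which the coordinate $x_{i_1}(t)$ meets $H$ once, transversally, while the other coordinates stay off each other's spans; all of the ``singularity data'' is carried by $x_{i_1}$ puncturing the fixed hyperplane $H$.

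For existence I would build the loop as a concatenation of two pieces. On the first piece I move only $x_{i_1}$ along a short arc that punctures $H$ once, transversally, at a generic point $p\in H$ with all other points frozen; choosing $p$ away from the finitely many other hyperplanes $H_S$ spanned by the $(k-1)$-subsets $S\subset[n]\setminus\{i_1\}$, this arc produces exactly one singular moment, of type $\{i_1,\dots,i_k\}$, and no other. It ends at a configuration $y'$ differing from $y$ only in the position of $x_{i_1}$, now on the far side of $H$. On the second piece I return to $y$ by a \emph{void} path (no singular moments). Here I would stress a genuine topological subtlety that clarifies the phrasing of the lemma: a loop of $x_{i_1}$ meeting $H$ an odd number of times is non-contractible in $\R P^{k-1}$, and since every hyperplane represents the nonzero class in $H_{k-2}(\R P^{k-1};\Z/2)$, such a loop must meet \emph{every} hyperplane, in particular each $H_S$, an odd number of times. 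Hence the return to $y$ cannot be performed by moving $x_{i_1}$ alone; it must also move other points, which is why ``all points except $x_{i_1}$ are fixed'' can be imposed only at the singular instant. That a void return exists — i.e. that $y$ and $y'$ lie in one component of the space of non-singular configurations — is the content of Lemma \ref{lemBB}, underlain by the transitivity of $PGL(k,\R)$ in Theorem \ref{wulem}, which identifies all non-degenerate starting positions.

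For the connectedness (uniqueness up to homotopy) I would argue that, up to homotopy within the class, any admissible path decomposes as a void prefix, a single transversal crossing of $H$ by $x_{i_1}$, and a void suffix. The space of such single crossings of the fixed hyperplane $H$ is connected (it is parametrized by the crossing point on $H$ and a co-orientation, a connected datum), so the crossing pieces of two admissible paths are homotopic. It then remains to homotope the void prefixes and suffixes rel endpoints without ever creating a singular moment; this is exactly the uniqueness-up-to-homotopy assertion of Lemma \ref{lmA}, which I would invoke to conclude.

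The step I expect to be the main obstacle is this last one: controlling the homotopy of the void portions inside the complement of the singular locus. The difficulty is twofold — one must show the relevant component of the non-singular configuration space is sufficiently simply connected that the void portions are homotopic rel endpoints, and one must ensure the homotopy never pushes a point onto some $H_S$, which would create a spurious singularity and leave the class. This is precisely where the rotation ``kernel'' described in the introduction threatens connectedness, and where the projective (rather than affine) geometry, the parity phenomenon above, and the careful choice of base configurations are all essential; making it rigorous is the heart of Lemma \ref{lmA}, on which I would lean.
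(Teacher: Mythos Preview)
Your proposal is correct and follows essentially the same route as the paper: for existence you build a single explicit crossing and then transport via $PGL(k,\R)$ (Theorem~\ref{wulem}/Lemma~\ref{lemBB}), and for uniqueness you align the singular moments of the two given paths and reduce the remaining void pieces to Lemma~\ref{lmA}, exactly as the paper does. Your hyperplane reformulation and the mod~$2$ intersection argument explaining why the return leg cannot move $x_{i_1}$ alone are additional geometric detail that the paper's terse proof omits, but the logical skeleton is the same.
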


\begin{proof}
Indeed, for the first statement of the Lemma,
it suffices to construct a path for some initial position of points and
then apply Theorem \ref{wulem}.

For the second statement, let us take two different paths $\gamma_{1}$ and
$\gamma_{2}$ satisfying the conditions of the Lemma. By a small perturbation, we may assume that for both of them,
 $t=\frac{1}{2}$ is a singular moment with the same position of $y_{i_{1}}$.

 Now, we can contract the loop formed by $\gamma_{1}|_{t\in [\frac{1}{2},1]}$
and the inverse of $\gamma_{2}|_{t\in [\frac{1}{2},1]}$ by using Lemma \ref{lmA} as
this is a small perturbation of a void braid. We are left with
$\gamma_{1}|_{t\in [0,\frac{1}{2}]}$ and the inverse of $\gamma_{2}|_{t\in [0,\frac{1}{2}]}$
which is contractible by Lemma \ref{lmA} again.

\end{proof}

\begin{rk}
Note that in the above lemma, we deal with the space $C'_{n}(\R{}P^{k-1})$,
not with ${\tilde C}'_{n}(\R{}P^{k-1})$. On the other hand, we may always choose
$i_{1}\in \{k,k+1\}$; hence, the path in question can be chosen in ${\tilde C}'(\R{}P^{k-1})$.
\end{rk}

Now, for every subset
 $m\subset [n], Card(m)=k+1$ we can create a path
 $p_{m}$ starting from any of the base points listed
 above and ending at the corresponding basepoints.

Now, we construct our path step-by step by applying Lemma \ref{lm2} and returning
to some of base points by using Lemma \ref{lemBB}.

From \cite{HigherGnk}, we can easily get the following
\begin{lm}
Let $i_{1},\cdots, i_{k+1}$ be some permutation of $1,\cdots, k+1$. Then the concatenation
of paths $p_{i_{1}i_{2}\cdots i_{k}}p_{i_{1}i_{3}i_{4}\cdots i_{k+1}}\cdots
p_{i_{2}i_{3}\cdots i_{k}}$ \\ is homotopic to the concatenation of paths in the inverse
order $p_{i_{2}i_{3}\cdots i_{k}}\cdots p_{i_{1}i_{3}i_{4}\cdots i_{k+1}}p_{i_{1}i_{2}\cdots i_{k}}.$\label{quadrisec}
\end{lm}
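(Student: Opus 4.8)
The plan is to realize this lemma as the path-level incarnation of relation $(3')$ in $G_{k+1}^{k}$, using the geometry of $k+1$ points in $\R{}P^{k-1}$ as the governing structure. The concatenation $p_{i_{1}i_{2}\cdots i_{k}}\cdots p_{i_{2}i_{3}\cdots i_{k}}$ runs through all $k+1$ subsets of size $k$ of the set $\{i_{1},\dots,i_{k+1}\}$, each exactly once; under the map $f$ of Theorem \ref{thgn3} it records precisely the word $a_{m^{1}}\cdots a_{m^{k+1}}$, while the reverse concatenation records $a_{m^{k+1}}\cdots a_{m^{1}}$. So the lemma asserts that the two paths, which $f$ sends to equal elements of $G_{k+1}^{k}$ by $(3')$, are in fact honestly homotopic in the configuration space. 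Since $f$ is only known to be a well-defined invariant (not yet injective), this homotopy must be produced geometrically rather than deduced from $(3')$.

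First I would set up the local model. By Theorem \ref{wulem} it suffices to fix one convenient starting configuration of the $k+1$ points in general position; I would choose the standard simplex coordinates so that the $(k+1)$-th point sits at $(1:\cdots:1)$ and the others at the coordinate vertices. Each elementary path $p_{m}$ is, by Lemma \ref{lm2}, the canonical move in which a single designated point (which we may take to be $i_{1}$, and in fact choose in $\{k,k+1\}$ by the remark following Lemma \ref{lm2}) crosses the $(k-2)$-plane spanned by the other $k-1$ points of $m$, with all remaining points held fixed; by the uniqueness clause of Lemma \ref{lm2} this path is determined up to homotopy. I would then interpret the concatenation as a single continuous motion of the configuration in which, one after another, each of the $k+1$ collinearity/coplanarity events of the distinguished $(k-1)$-subsystems is crossed.

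Next I would identify the homotopy. The key geometric fact is that sweeping a point around through all $k+1$ facet-hyperplanes of the simplex returns the configuration to its start, and reversing the order of crossings corresponds to pushing the point around the \emph{other} side of the arrangement of hyperplanes. The two orderings bound a disk in the complement of the discriminant (the stratum where some $k$ points become degenerate), and contracting across that disk is exactly the kind of local homotopy supplied by Lemma \ref{lmA}: each intermediate sub-loop is a small perturbation of a void braid and hence contractible. Concretely, I would peel off matching initial and terminal segments of the two concatenations and contract the enclosed loops one at a time, reducing the length of the word to be matched at each stage, exactly as in the proof of Lemma \ref{lm2}.

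The main obstacle I anticipate is controlling the \emph{global} consistency of these crossings: each $p_{m}$ individually returns to a base point, but I must verify that the intermediate configurations in the two orderings can be joined by homotopies that never leave $C'_{n}(\R{}P^{k-1})$, i.e. that no \emph{unintended} degeneracy of some other $k$-subset is forced during the interpolation. This amounts to checking that the relevant portion of the hyperplane complement is simply connected (or at least that the two orderings are connected within it), which is precisely the content I would extract from Lemma \ref{lmA} applied repeatedly. Verifying that the distinguished point can be routed to avoid all other strata — rather than merely the one being crossed — is the delicate point, and is where the restriction to $n=k+1$ (so that relations $(2)$ are void and only the single relation $(3')$ is in play) makes the argument tractable.
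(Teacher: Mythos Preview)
Your proposal takes a genuinely different route from the paper. The paper's proof is two lines: it invokes \cite{HigherGnk}, where the required homotopy is already constructed for \emph{some} fixed permutation of $1,\dots,k+1$, and then observes that the $PGL(k,\R)$-action of Theorem~\ref{wulem} (``all basepoints are similar'') transports that single homotopy to any other permutation. No new geometry is produced; the lemma is essentially a corollary of the cited construction plus symmetry. Your plan, by contrast, attempts to rebuild the homotopy from scratch inside the configuration space, which is closer in spirit to what \cite{HigherGnk} itself does.

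There are, however, real gaps in your direct construction. First, the picture of ``sweeping a single point around through all $k+1$ facet-hyperplanes'' cannot account for all $k+1$ of the paths $p_{m}$: if the moving point is $x_{k+1}$, it only meets the $k$ hyperplanes spanned by $(k-1)$-subsets of $\{x_{1},\dots,x_{k}\}$, while the remaining subset $m=\{1,\dots,k\}$ does not involve $x_{k+1}$ at all, so $p_{1\cdots k}$ requires a \emph{different} point to move (cf.\ the remark after Lemma~\ref{lm2}). The local model is therefore not a single point circling an arrangement. Second, your appeal to Lemma~\ref{lmA} is misplaced: that lemma contracts \emph{void} loops, whereas the loop formed by the forward concatenation followed by the reverse has $2(k+1)$ singular moments and is certainly not void; the disk you want does cross the codimension-one strata (it must only avoid the codimension-two locus where all $k+1$ points are simultaneously degenerate), so ``complement of the discriminant'' in your sense is the wrong ambient space. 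Third, the ``peel off matching initial and terminal segments'' manoeuvre from the proof of Lemma~\ref{lm2} does not apply here, since the two concatenations begin with different $p_{m}$'s. What is actually needed is the explicit two-parameter family passing through the codimension-two stratum --- exactly the construction carried out in \cite{HigherGnk} --- and that is the missing ingredient your plan does not supply.
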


\begin{proof}
Indeed, in \cite{HigherGnk}, some homotopy corresponding to the above mentioned relation
corresponding to {\em some} permuation is discussed. However, since all basepoints are
similar to each other as discussed above, we can transform the homotopy from \cite{HigherGnk}
to the homotopy for any permutation.
\end{proof}

\begin{lm}
For the path starting from the point $(y_{1},\cdots, y_{k},y_{s})$ constructed as in
Lemma \ref{lm2} for the set of indices $j$,
we get to the point $(y_{1},\cdots, y_{k},y_{s'})$ such that:
\begin{enumerate}
\item if $j=1,\cdots, k$, then $s'$ differs from $s$ only in coordinate $a_{j}$;
\item if $j=k+1$, all coordinates of $s'$ differ from those coordinates of $s$ by sign.
\end{enumerate}
\label{governed}
\end{lm}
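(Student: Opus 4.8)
The plan is to reduce the whole statement to one sign computation for maximal minors. Fix the lifts $v_1=e_1,\dots,v_k=e_k$ of the frame points and let $v_{k+1}=(s_1,\dots,s_k)$ be the lift of $y_s$; for each $\ell\in[k+1]$ write $\Delta_\ell$ for the $k\times k$ determinant of the lifts of the $k$ points indexed by $[k+1]\setminus\{\ell\}$. Applying Cramer's rule to $v_{k+1}=\sum_i c_i v_i$ gives $c_i=(-1)^{k-i}\Delta_i/\Delta_{k+1}$, so the coordinates of the last point in the current frame are $s_i=(-1)^{k-i}\Delta_i/\Delta_{k+1}$. Since the sign $(-1)^{k-i}$ is constant along any path, $\operatorname{sign}(s_i)$ is governed entirely by $\operatorname{sign}(\Delta_i)$ and $\operatorname{sign}(\Delta_{k+1})$. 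The key local fact is that the generator $a_{m_j}$ corresponds, by the construction of Lemma \ref{lm2}, to the subset $m_j=[k+1]\setminus\{j\}$ becoming coplanar, i.e. to the vanishing $\Delta_j=0$; because the path is good and stable, no other subset degenerates at that instant, so every $\Delta_\ell$ with $\ell\ne j$ stays nonzero in a neighbourhood and keeps its sign, while transversality forces $\Delta_j$ to have a simple zero and hence to change sign across the crossing.

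For $j\in\{1,\dots,k\}$ the moving point is $y_{k+1}=y_s$ and the $k-1$ stationary points of $m_j$ are the frame points $e_i$ with $i\ne j$, which span the coordinate hyperplane $\{a_j=0\}$. Thus $y_s$ simply crosses $\{a_j=0\}$: the coordinate $a_j$ changes sign and no other, so $s_j\mapsto -s_j$ and $s_i\mapsto s_i$ for $i\ne j$. In the minor language this is the statement that $\Delta_j$ flips while $\Delta_{k+1}$ (a frame determinant, not involving $v_{k+1}$) is unchanged, which is exactly case~1.

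The delicate case is $j=k+1$, where $m_{k+1}=\{1,\dots,k\}$ is the frame itself and $y_s$ is \emph{not} among the coplanar points. Here the moving point is a frame point (we may take $y_k$, as in the remark after Lemma \ref{lm2}), so $y_s$ is fixed as a point of $\R P^{k-1}$ and therefore $y_{s'}=y_s$ projectively, which forces $s'=\pm s$. To pin down the global sign I would combine the crossing, which flips the frame orientation $\Delta_{k+1}$, with the canonical return to a base point furnished by Lemma \ref{lemBB}: restoring the standard frame feeds exactly one extra overall sign into $s_i=(-1)^{k-i}\Delta_i/\Delta_{k+1}$, so that every $s_i$ flips. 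Equivalently, lifting to the double cover $S^{k-1}\to\R P^{k-1}$, the closed loop realising $a_{m_{k+1}}$ (the ``rotation by $\pi$'' of the introduction) carries the continuously tracked lift of $y_s$ to its antipode, which is $s'=-s$.

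The main obstacle is precisely this last sign bookkeeping for $j=k+1$. A naive realisation, in which one simply pushes $y_k$ to $-e_k$, flips $\Delta_{k+1}$ together with all $\Delta_i$ that contain $v_k$, and the minor formula then yields only the single reflection $s_k\mapsto -s_k$ rather than the global flip; so the content of the lemma is to show that the \emph{correct} return path contributes a global sign and lands all coordinates on $-s$. This is where the uniqueness up to homotopy of the return path (Lemmas \ref{lmA} and \ref{lemBB}) and the identification $\R P^{k-1}=S^{k-1}/\pm$ are indispensable: they single out the loop whose lift is the antipodal map. Granting them, the minor formula $s_i=(-1)^{k-i}\Delta_i/\Delta_{k+1}$ reduces the entire lemma to the two sign patterns recorded above.
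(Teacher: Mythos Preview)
Your minor/Cramer framework is sound, and your treatment of case~1 is correct and essentially complete: with $i_1=k+1$ chosen as the moving point, the $k-1$ stationary frame points $e_i$ ($i\neq j$) span $\{a_j=0\}$; a good stable path with a single singular moment at $m_j$ forces $y_s$ to cross this hyperplane once and no other coordinate hyperplane (any such crossing would be an additional singularity $m_i$), so exactly $s_j$ flips. The straight segment from $(s_1,\dots,s_k)$ to $(-s_1,\dots,s_k)$ (in the appropriate coordinate) already realises this.

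Where you go astray is case~2. Once you observe that $y_{k+1}=y_s$ does not move (the moving point is $y_k$), you have $y_{s'}=y_s$ as points of $\R P^{k-1}$, and you are \emph{done}: the labels $s$ and $-s$ name the same projective point, so ``all coordinates of $s'$ differ from those of $s$ by sign'' is just a bookkeeping way of saying the basepoint is unchanged, phrased so as to be uniform with case~1. There is no global sign to pin down. Your ``naive realisation'' paragraph, the appeal to the antipodal lift, and the invocation of Lemmas~\ref{lmA} and~\ref{lemBB} to single out a preferred sign are all unnecessary; the ambiguity you are chasing is exactly the ambiguity $s\sim -s$ built into the $2^{k-1}$ basepoints. (Indeed, your own minor computation---$\Delta_{k+1}$ flips while $\Delta_i$ for $i<k$ flip with it and $\Delta_k$ stays fixed---shows the lifted formula depends on the choice of lift of $v_k$ at the endpoint; projectively there is nothing to resolve.)

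As for comparison: the paper does not supply an explicit proof of this lemma in the visible text---it is stated and then used, the justification being implicit in the construction of the basepoints and return paths (Lemma~\ref{lemBB}). Your direct hyperplane-crossing argument is a clean independent verification; just stop case~2 at ``$y_{s'}=y_s$ projectively''.
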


\begin{rk}
Note that in general the path constructed in the above lemma is generally not a braid, but
almost a braid: for the fixed position of the initial point, there are only infinitely many possibilities
for the position of the final point.
\end{rk}

Denote the map from words in $G_{k+1}^{k}$ to paths between basepoints by $g$.

By construction, we see that for every word $w$ we have $f(g(w))=w\in G_{k+1}^{k}$.

Now, we define the group ${\tilde G}_{k+1}^{k}$ as the subgroup of $G_{k+1}$ which is
taken by $g$ to {\em braids}, i.e., to those paths with coinciding initial and final
points. From lemma \ref{governed}, we see that this is a subgroup of index $(k-1)$: there are exactly
$(k-1)$ coordinates.

\subsection{Equal words lead to homotopic paths}

Let us pass to the proof of Theorem \ref{th1}.
Our next goal is to see that equal words can originate only from homotopic paths.

To this end, we shall first show that the map $f$
from Theorem \ref{thgn3}
is an isomorphism for $n=k+1$. To perform this goal, we should construct the
inverse map $g:{\tilde G}_{k+1}^{k}\to \pi_{1}({\tilde C'}_{k+1}(\R{}P(k-1)))$.

Note that for $k=3$ we deal with
the pure braids $
PB_{4}(\R{}P^{2}). $

Let us fix a point $x\in C'_{4}(\R{}P^{2})$.
With each generator $a_{m},m\subset [n],Card(m)=k$ we associate a path  $g(m)=y_{m}(t)$,
described in Lemma \ref{lm2}. This path is not a braid: we can return to any
of the $2^{k-1}$ base points. However, once we take the concatenation of paths correspoding
to ${\tilde G}_{k+1}^{k}$, we get a braid.

By definition of the map $f$, we have $f(g(a_{m}))=a_{m}$.
Thus, we have chosen that the map $f$ is a surjection.

Now, let us prove that the kernel of the map $f$ is trivial. Indeed,
assume there is a pure braid $\gamma$ such that $f(\gamma)=1\in G_{k+1}^{k}$.
We assume that $\gamma$ is good and stable. If this path has $l$ critical points, then
we have the word corresponding to it  $a_{m_1}\cdots a_{m_l}\in G_{k+1}^{k}$.

Let us perform the transformation $f(\gamma)\to 1$ by applying the relations of $G_{k+1}^{k}$
to it and transforming the path $\gamma$ respectively. For each relation of the sort $a_{m}a_{m}=1$ for a generator $a_{m}$ of the group
$G_{k+1}^{k}$, we see that the path $\gamma$ contains two segments whose concatenation
is homotopic to the trivial loop (as follows from the second statement of Lemma 4).

Whenever we have a relation of length $2k+2$ in the group $G_{k+1}$, we use the Lemma
\ref{quadrisec} to perform the homotopy of the loops.

 Thus, we have proved that if the word $f(\gamma)$ corresponding to a braid $\gamma\in G_{k+1}^{k}$
is equal to $1$ in $G_{k+1}^{k}$ then the braid $\gamma$ is isotopic to a braid $\gamma'$
such that the word corresponding to it is empty.
Now, by Lemma \ref{lmA}, this braid is homotopic to the trivial braid.

\section{The group $H_{k}$ and the algebraic lemma}

The aim of the present section is to reduce the word problem in $G_{k+1}^{k}$ to the word problem
in a certain subgroup of it, denoted by $H_{k}$.

Let us rename all generators of $G_{k+1}^{k}$ lexicographically:
$b_{1}=a_{1,2,\cdots,k},\cdots, b_{k+1}=a_{2,3,\cdots, k+1}$.

Let $H_{k}$ be the subgroup of $G_{k+1}^{k}$ consisting of all elements $x\in G_{k+1}^{k}$ that can be
represented by words with no occurencies of the last letter $b_{k+1}$.

Our task is to understand whether a word in $G_{k+1}^{k}$ represents an element in $H$.
To this end, we recall the map from \cite{MN}. Consider the group $F_{k}=\Z_{2}^{*2^{k}}=\langle
c_{m}|c_{m}^{2}=1\rangle$, where all generators $c_{m}$ are indexed by $k$-element strings of $0$
and $1$ with only relations being that the square of each generator is equal to $1$.
We shall construct a map\footnote{This map becomes a homomorphism when restricted to a
finite index subgroup} from $G_{k+1}^{k}$ to $F_{k}$ as follows.

Take a word $w$ in generators of $G_{k+1}^{k}$ and list all occurencies of the last letter
$b_{k+1}=a_{2,\cdots, k+1}$ in this word. With any such occurency we first associate the string of indices $0,1$
of length $k$. The $j$-th index is the number of letters $b_{j}$ preceding this occurency of $b_{k+1}$ modulo $2$.
Thus, we get a string of length $k$.

Now, we identify ``opposite pairs'' of strings: we set $(x_{1},\cdots, x_{k})\sim(x_{1}+1,\cdots, x_{k}+1)$.
Now, we may think that the last ($k$-th) element of our string is always $0$, so, we can restrict ourselves
with $(x_{1},\cdots, x_{k-1},0)$. Such a string of length $k-1$ is called the {\em index} of
the occurency of $b_{k+1}$.

Having this done, we associate with each occurency of $b_{k+1}$ having index $m$ the generator $c_{m}$ of $F_{k}$.
With the word $w$, we associate the word $f(w)$ equal to the product of all generators $c_{m}$ in order.

In \cite{MN}, the following Lemma is proved
\begin{lm}
The map $f:G_{k+1}^{k}\to F_{k}$ is well defined.
\end{lm}

Now, let us prove the following
\begin{lm}
If $f(w)=1$ then $w\in H_{k}$.
\end{lm}

In other words, the free group $F_{k}$ yields the only obstruction for an element from $G_{k+1}^{k}$ to have a presentation
with no occurency of the last letter.

\begin{proof}
Let $w$ be a word such that $f(w)=1$. If $f(w)$ is empty, then there is nothing to prove. Since $f$ is trivial in $F_{k}$, it contains some two adjacent occurencies of
the same generator $b_{x},\cdots, b_{x}$. This means that in the initial word $w$ is of the type
$w=A b_{k+1} B b_{k+1} C$, where $A$ and $C$ are some words, and $B$ contains no occurencies of $b_{k+1}$
and the number of occurencies of $b_{1},b_{2},\cdots, b_{k}$ in $B$ are of the same parity.

Our aim is to show that $w$ is equal to a word with smaller number of $b_{k+1}$ in $G_{k+1}^{k}$. Then we will be able to induct
on the number of $b_{k+1}$ until we get a word without $b_{k+1}$.

Thus, it suffices for us to show that
$b_{k+1}Bb_{k+1}$ is equal to a word from $H_{k}$. We shall induct on the length of $B$.
Without loss of generality, we may assume that $B$ is reduced, i.e., it does not contain
adjacent $b_{j}b_{j}$.

Let us read the word $B$ from the very beginning $B=b_{i_1}b_{i_2}\cdots$ If all elements
$i_{1},i_{2},\cdots$ are distinct, then, having in mind that the number of occurencies of all generators
in $B$ should be of the same parity, we conclude that $b_{k+1}B= B^{-1} b_{k+1}$, hence $b_{k+1}Bb_{k+1}=B^{-1}b_{k+1}b_{k+1}=
B^{-1}$
is a word without occurencies of $b_{k+1}$.

Now assume ${i_1}={i_p}$. Then we have
$b_{k+1}B=b_{k+1}b_{i_1}\cdots b_{i_{p-1}} b_{i_1} B'$. Now we collect all indices distinct from
$i_{1},\cdots, i_{p-1},{k+1}$ and write the word $P$ containing exactly one generator for each index
(the order does not matter). Then the word $W = P b_{k+1} b_{i_{1}}\cdots b_{i_{p-1}}$ contains
any letter exactly once and we can invert the word $W$ as follows: $W^{-1}=b_{i_{p-1}}\cdots b_{i_{1}}b-{k+1}P$.
Thus, $b_{k+1}B=P^{-1}(Pb_{k+1}b_{i_1}\cdots b_{i_{p-1}})b_{i_1}B'=P^{-1}b_{i_{p-1}}\cdots b_{i_1}b_{k+1}P^{-1}b_{i_1}B'$.

We know that the letters in $P$ (hence, those in $P^{-1}$) do not contain $b_{i_1}$. Thus,
the word $P^{-1} b_{i_1}$ consists of different letters. Now we perform the same trick:
create the word $Q=b_{i_2}b_{i_3}\cdots b_{i_{p-1}}$ consisting of remaining letters from $\{1,\cdots, k\}$,
we get:

$$ P^{-1}b_{i_{p-1}}\cdots b_{i_1}b_{k+1}P^{-1}b_{i_1}B'$$

$$=P^{-1}b_{i_{p-1}}\cdots b_{i_1}Q^{-1}Qb_{k+1}P^{-1}b_{i_1}B'$$

$$=P^{-1}b_{i_{p-1}}\cdots b_{i_1}Q^{-1}b_{i_1}Pb_{k+1}QB'.$$

Thus, we have moved $b_{k+1}$ to the right and between the two occurencies
of the letter
$b_{k+1}$, we replaced $b_{i_1}\cdots, b_{i_{p-1}}b_{i_1}$ with
just $b_{i_2}\cdots b_{i_{p-1}}$, thus, we have shortened the distance
between the two adjacent occurencies of $b_{k+1}$.

Arguing as above, we will finally cancel these two letters and perform the
induction step.

\end{proof}

\begin{thm}
If the word problem in $H_{k}$ is solvable, then the word problem in $G_{k+1}^{k}$ is solvable.
Moreover, the solution of the latter is constructive once the solution of the former is constructive.
\end{thm}

\begin{proof}
Indeed, having a word $w$ in generators of $G_{k+1}^{k}$, we can look at the image of this
word by the map $f$ in the group $H_{k}$. If it is not equal to $1$, then, from \cite{MN},
it follows that $w$ is non-trivial, otherwise we can construct a word ${\tilde w}$ in $H_{k}$
equal to $w$ in $G_{k+1}^{k}$.
\end{proof}

Certainly, to be able to solve the word problem in $H_{k}$, one needs to know a presentation for
$H_{k}$. It is natural to take $b_{1},\cdots, b_{k}$ for generators of $H_{k}$. Obviously,
they satisfy the relations $b_{j}^{2}=1$ for every $j$.

To understand the remaining relations for different $k$, we shall need geometrical arguments.

\section{Concluding remarks}

We have completely constructed the isomorphism between the (finite index subgroup) of
the group $G_{k+1}^{k}$ and a fundamental group of some configuration space.

This completely solves the word problem in $G_{4}^{3}$ for braid groups in projective spaces are very
well studied, see, e.g., \cite{Bu,GG1,GG2,GG3}. The same can be said about the conjugacy problem in ${\tilde G}_{4}^{3}$.

Besides, we have seen that the word problem for the case of general $G_{k+1}^{k}$
can be reduced to the case of $H_{k}$.

In a subsequent paper, we shall completely describe the relations in $H_{k}$ by geometric
reasons and apply it to the word problem in $G_{5}^{4}$.
Here we just mention that it was proved by A.B.Karpov (unpubished)
that the only relations in $H_{3}$ are $a^{2}=b^{2}=c^{2}$ which also follows
from the geometrical techniques of the present paper.

The main open question which remains unsolved is how to construct a configuration
space which can realise $G_{n}^{k}$ for $n>k+1$. Even in the case of $G_{4}^{2}$
this problem seems very attractive though the word and conjugacy  problems for $G_{n}^{2}$ can
be solved by algebraic methods, see \cite{Coxeter}.

The word problem and the partial case of conjugacy problem in $G_{4}^{3}$ was first
solved by A.B.Karpov, but the author has not yet seen any complete text of
it.

It would be very interesting to compare the approach of $G_{n}^{k}$ with various generalizations
of braid groups, e.g., Manin-Schechtmann groups \cite{ManinSchechtmann,KapranovVoevodsky}.

The author is very grateful to I.M.Nikonov, L.A.Bokut' and Jie Wu for extremely useful discussions
and comments.

 \end{document}